\documentclass[11pt]{amsart}

\usepackage[
  textwidth=5.4in,
  textheight=8.2in,
  centering
]{geometry}

\usepackage{amsmath,amssymb,amsfonts}
\usepackage{amscd}
\usepackage{hyperref}

\newtheorem{theorem}{Theorem}
\newtheorem{lemma}{Lemma}

\newtheorem{corollary}{Corollary}

\newcommand{\Z}{\mathbb Z}
\newcommand{\Ztwo}{\mathbb Z_2}
\newcommand{\Sphere}{\mathbb S}
\newcommand{\coind}{\operatorname{coind}}
\newcommand{\ind}{\operatorname{ind}}
\newcommand{\B}{\mathrm B}

\newcommand{\susp}{\operatorname{susp}}

\title{On the Gap Between the Co-Indices of a Free \texorpdfstring{$\Ztwo$}{Z2}-Space and Its Suspension}

\author{Hamid Reza Daneshpajouh}
\address{School of Mathematical Sciences, University of Nottingham Ningbo China, 199 Taikang East Road, Ningbo, 315100, China}
\email{Hamid-Reza.Daneshpajouh@nottingham.edu.cn}

\begin{document}

\maketitle

\begin{abstract}
For a free \(\Ztwo\)-space \(X\), the co-index \(\coind(X)\) is the largest
integer \(m\) for which there exists a \(\Ztwo\)-equivariant map
\(\Sphere^m\to X\), where \(\Sphere^m\) carries the antipodal action. Since
suspension sends such a map to a \(\Ztwo\)-equivariant map
\(
\Sphere^{m+1}\rightarrow \susp(X),
\)
one always has
\[
\coind(\susp(X))\geq \coind(X)+1.
\]
We prove that the excess over this lower bound can be arbitrarily large. More
precisely, for every \(n\geq 2\), we construct a finite free \(n\)-dimensional
simplicial \(\Ztwo\)-complex \(\mathcal K\) such that \(\coind(\mathcal K)=1\), and \(\coind(\susp(\mathcal K))=n+1\). This answers a question of Simonyi, Tardos, and Vre\'cica on the possible
growth of co-index under suspension and, equivalently, shows that the
co-index lower bound on the chromatic number of a graph \(G\) obtained from
\(\B_0(G)\) can exceed the corresponding bound obtained from the box complex
\(\B(G)\) by an arbitrarily large amount.
\end{abstract}

\section{Introduction}
Throughout the paper, we use the following convention for suspension. Let \(X\)
be a \(\Ztwo\)-space, and denote its involution by \(x\mapsto -x\). We define
\[
\susp(X):=(X\times[-1,1])/\sim,
\]
where all points of \(X\times\{1\}\) are identified to one suspension point and
all points of \(X\times\{-1\}\) are identified to the other suspension point.
The induced \(\Ztwo\)-action on \(\susp(X)\) is given by \(
-[x,t]=[-x,-t]\). In particular, if the
\(\Ztwo\)-action on \(X\) is free, then the induced \(\Ztwo\)-action on
\(\susp(X)\) is also free. With this convention,
\(\susp(\Sphere^m)\) is naturally \(\Ztwo\)-homeomorphic to
\(\Sphere^{m+1}\) with the antipodal action. Recall that, for a free \(\Ztwo\)-space \(X\), the co-index
\(\coind(X)\) is the largest integer \(m\) for which there exists a
\(\Ztwo\)-equivariant map
\(
\Sphere^m\rightarrow X,
\)
where \(\Sphere^m\) carries the antipodal action. If such a map exists, then
suspending it gives a \(\Ztwo\)-equivariant map \(
\Sphere^{m+1}\rightarrow \susp(X)
\). Hence
\begin{equation}
\coind(\susp(X))\geq \coind(X)+1.
\label{eq:co-susp}
\end{equation}

The aim of this paper is to show that the excess in
\eqref{eq:co-susp} can be made arbitrarily large for suitable finite free
\(\Ztwo\)-complexes \(X\). This gives an affirmative answer to a question of
Simonyi, Tardos, and Vre\'cica~\cite{simonyi2009local} concerning the possible
growth of co-index under suspension. Although this question is of independent
interest in equivariant topology, it is also strongly motivated by topological
methods in graph coloring.

These methods originate with Lov\'asz's proof of Kneser's conjecture
\cite{lovasz1978kneser}. A central idea is to associate to a graph \(G\) a free
\(\Ztwo\)-space whose equivariant topology yields lower bounds on the
chromatic number \(\chi(G)\) via the Borsuk--Ulam theorem. Among the most
important such spaces are the box complexes \(\B(G)\) and \(\B_0(G)\).

Briefly, \(\B(G)\) is the free \(\Ztwo\)-complex whose simplices encode ordered
pairs of disjoint vertex sets forming complete bipartite subgraphs of \(G\),
with the additional common-neighbor condition in the cases where one of the
parts is empty. The \(\Ztwo\)-action interchanges the two parts. The variant
\(\B_0(G)\) is obtained by omitting the common-neighbor condition. For precise
definitions and further background, we refer the reader to
\cite{Daneshpajouh2025}. These complexes give the standard co-index bounds
\begin{equation}
\chi(G)\geq \coind(\B(G))+2,
\label{eq:box}
\end{equation}
and
\begin{equation}
\chi(G)\geq \coind(\B_0(G))+1.
\label{eq:box0}
\end{equation}

The relation between these two complexes is governed by a theorem of
Csorba~\cite{csorba2007homotopy}, which states that \(\B_0(G)\) is
\(\Ztwo\)-homotopy equivalent to the suspension of \(\B(G)\):
\begin{equation}
\susp(\B(G))\simeq_{\Ztwo}\B_0(G).
\label{eq:suspension}
\end{equation}
Applying \eqref{eq:co-susp} to \(X=\B(G)\) and using
\eqref{eq:suspension}, we obtain
\[
\coind(\B_0(G))\geq \coind(\B(G))+1.
\]
Consequently, the bound \eqref{eq:box0} is always at least as strong as the
bound \eqref{eq:box}.

It is therefore natural to ask to what extent the \(\B_0(G)\)-bound can improve
upon the \(\B(G)\)-bound; equivalently, how large the difference
\[
\coind(\B_0(G))-\bigl(\coind(\B(G))+1\bigr)
\]
can be. By Csorba's universality theorem for box complexes
\cite[Theorem~5.1]{csorba2007homotopy}, every finite free simplicial
\(\Ztwo\)-complex arises, up to \(\Ztwo\)-homotopy equivalence, as \(\B(G)\)
for some graph \(G\). Hence, using the suspension equivalence
\eqref{eq:suspension}, the graph-theoretic problem is equivalent to the
topological suspension problem above when restricted to finite free simplicial
\(\Ztwo\)-complexes. We shall work in this finite simplicial setting
throughout.

Simonyi, Tardos, and Vre\'cica~\cite[Section~5]{simonyi2009local} established
that this gap can attain the value \(1\). More precisely, they gave explicit
constructions based on compact orientable two-dimensional manifolds \(T\) of
positive even genus for which
\[
\coind(T)=1,
\qquad
\coind(\susp(T))=3.
\]
Subsequently, examples derived from Brieskorn manifolds yielded spaces \(X\)
satisfying
\[
\coind(X)=1,
\qquad
\coind(\susp(X))=4;
\]
see~\cite[Proposition~8.1]{Daneshpajouh2025}. Thus the gap can also attain the
value \(2\). The general question of whether the gap is unbounded remained open. The main result of this paper gives an affirmative answer.

\begin{theorem}\label{thm:main}
For every integer \(n\geq 2\), there exists a finite \(n\)-dimensional
simplicial complex \(\mathcal K\) equipped with a free \(\Ztwo\)-action such
that
\[
\coind(\mathcal K)=1
\qquad\text{and}\qquad
\coind(\susp(\mathcal K))=n+1.
\]
\end{theorem}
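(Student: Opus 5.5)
The plan is to reduce the statement to finding a single free \(\Ztwo\)-complex with the right fundamental group and homology, and to get both co-index values by soft arguments. The upper bound \(\coind(\susp(\mathcal K))\le n+1\) is automatic: if \(\dim\mathcal K=n\), then \(\dim\susp(\mathcal K)=n+1\), and an equivariant map \(\Sphere^{m}\to\susp(\mathcal K)\) can be made simplicial. Since \(\susp(\mathcal K)\) is free and \((n+1)\)-dimensional, it has an equivariant map to \(\Sphere^{n+1}\), so Borsuk--Ulam forces \(m\le n+1\). It therefore suffices to build a finite free \(n\)-dimensional \(\Ztwo\)-complex \(\mathcal K\) with two properties:
\begin{itemize}
\item[(a)] \(\mathcal K\) is connected and \(\tilde H_i(\mathcal K;\Z)=0\) for \(i\le n-1\);
\item[(b)] \(\pi_1(\mathcal K/\Ztwo)\) is torsion-free.
\end{itemize}

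First I will check that (a) and (b) give the theorem. By (a), \(\susp(\mathcal K)\) is simply connected and has vanishing reduced homology through degree \(n\). By Hurewicz it is therefore \(n\)-connected. Giving \(\Sphere^{n+1}\) a free \(\Ztwo\)-CW structure with one free orbit of cells in each dimension, equivariant obstruction theory lets me extend an equivariant map cell-orbit by cell-orbit up to dimension \(n+1\). This gives \(\coind(\susp(\mathcal K))\ge n+1\), hence equality by the first paragraph.

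For the other index, connectedness gives \(\coind(\mathcal K)\ge 1\): join \(x\) to \(-x\) by a path \(\gamma\) and use \(\gamma\) followed by \(-\gamma\). Conversely, suppose \(f\colon\Sphere^2\to\mathcal K\) is equivariant. It descends to a map \(\mathbb{RP}^2\to\mathcal K/\Ztwo\) that pulls the covering \(\mathcal K\to\mathcal K/\Ztwo\) back to \(\Sphere^2\to\mathbb{RP}^2\). So the generator of \(\pi_1(\mathbb{RP}^2)\) goes to an element \(g\) not lying in the index-\(2\) subgroup \(\pi_1(\mathcal K)\); in particular \(g\ne 1\). But \(g^2=1\), which contradicts (b). Hence \(\coind(\mathcal K)=1\).

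The construction reduces to the case \(n=2\) plus a padding step. Suppose \(L\) is a finite free \(2\)-dimensional \(\Ztwo\)-complex that is acyclic, with \(L/\Ztwo\) aspherical and therefore with torsion-free \(\pi_1\), since a finite-dimensional \(K(\pi,1)\) forces torsion-freeness. For \(n>2\), choose a vertex \(v\). Wedge an \(n\)-sphere at \(v\) and its mirror copy at \(-v\), and triangulate compatibly to get \(\mathcal K\). Then \(\mathcal K\) is \(n\)-dimensional, has homology only in degree \(n\), and satisfies \(\mathcal K/\Ztwo\simeq (L/\Ztwo)\vee\Sphere^n\). The fundamental group is unchanged because \(n\ge2\), so (a) and (b) hold. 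For \(n=2\), take \(\mathcal K=L\).

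The main obstacle is producing \(L\). Equivalently, I need a torsion-free group \(G\) with a finite \(2\)-dimensional \(K(G,1)\) whose index-\(2\) subgroup \(H\) is acyclic; then \(L\) is the double cover of that \(K(G,1)\). My first attempt starts from Higman's acyclic torsion-free group \(H=\langle a,b,c,d\mid a^b=a^2,\ b^c=b^2,\ c^d=c^2,\ d^a=d^2\rangle\), whose presentation complex is aspherical. It carries the order-\(2\) symmetry \(\varphi\colon a\mapsto c,\ b\mapsto d,\ c\mapsto a,\ d\mapsto b\). I will try to form an extension \(G=\langle H,t\mid tht^{-1}=\varphi(h),\ t^2=h_0\rangle\), with \(h_0\) chosen so that \(G\) is torsion-free (for instance, as an HNN or amalgam presentation to which a Bass--Serre argument applies). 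I then need to check that the presentation complex of \(G\) is still aspherical and \(2\)-dimensional, with double cover acyclic. If this specific choice fails, the fallback is to combine a symmetric aspherical acyclic presentation with a fixed-point-free cellular involution of the presentation complex. In that case the quotient is automatically a finite aspherical \(2\)-complex, and (b) follows.
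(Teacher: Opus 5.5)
\textbf{The construction of $L$ fails: the complex you are looking for cannot exist.}

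Your reduction to (a) and (b) is sound. Your $2$-torsion argument, descending an equivariant map $\Sphere^2\to\mathcal K$ to the quotient, is a valid and slightly more general substitute for the paper's asphericity-plus-Borsuk--Ulam argument.

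A finite acyclic complex has the fixed point property. By the Lefschetz fixed point theorem every self-map $f$ of it has $\Lambda(f)=1\neq 0$. Equivalently, $\chi(L)=1$ is odd, whereas a finite free $\Ztwo$-complex has $\chi(L)=2\chi(L/\Ztwo)$. So no finite acyclic $L$ carries a free involution.

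The Higman route fails at the group level for the same reason. Suppose an acyclic group $H$ of finite cohomological dimension has index $2$ in $G$. The Lyndon--Hochschild--Serre spectral sequence collapses to $H^*(G;\mathbb F_2)\cong H^*(\Ztwo;\mathbb F_2)$, which is nonzero in every degree. By Serre's theorem ($\operatorname{cd}G=\operatorname{cd}H$ for torsion-free $G$), $G$ must then have torsion, whatever $h_0$ you choose. Your fallback, a fixed-point-free involution on an acyclic presentation complex, is ruled out directly by Lefschetz.

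\textbf{The padding strategy cannot work for $n\geq 3$, however $L$ is built.} Wedging on $n$-spheres only adds homology in degree $n$. So condition (a) for $\mathcal K$ forces $\widetilde H_i(L)=0$ for all $i\le n-1$, which makes $L$ acyclic whenever $\dim L<n$.

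What is missing is a genuinely $n$-dimensional free complex with nonzero top homology whose quotient still has torsion-free $\pi_1$. An example is one with the integral homology of $\Sphere^n$, where the Lefschetz number $1+(-1)^n\deg$ of the involution can vanish. The paper obtains exactly this by applying Leary's metric Kan--Thurston theorem to the boundary of the $(n+1)$-dimensional crosspolytope with its antipodal map. The naturality $t_X\circ T(a)=a\circ t_X$ forces the induced involution $T(a)$ to be free. The locally CAT(0) structure makes $T_X$ aspherical, so your condition (b) holds as well.

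Leary's construction turns $2$-dimensional complexes into $3$-dimensional ones, so the paper takes the case $n=2$ from Simonyi--Tardos--Vre\'cica. In your framework that case would need a $2$-dimensional $L$ with $H_1(L)=0$ but $H_2(L)\neq 0$, not an acyclic one, and you have not constructed such an $L$.
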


Combining Theorem~\ref{thm:main} with~\ref{eq:suspension}, and Csorba's universality theorem gives the
following graph-theoretic form of the result.
\begin{corollary}\label{cor:graph-gap}
For every integer \(n\geq 2\), there exists a graph \(G_n\) such that
\[
\coind(\B(G_n))=1
\qquad\text{and}\qquad
\coind(\B_0(G_n))=n+1.
\]
\end{corollary}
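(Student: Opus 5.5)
The plan is to separate the two requirements. The condition \(\coind(\susp(\mathcal K))=n+1\) will come from \(\susp(\mathcal K)\) being \(n\)-connected. The condition \(\coind(\mathcal K)=1\) will come from \(\mathcal K\) being aspherical with torsion-free \(\Ztwo\)-extension. The upper bound \(\coind(\susp(\mathcal K))\le n+1\) is automatic: \(\susp(\mathcal K)\) is a free \(\Ztwo\)-complex of dimension \(n+1\), and by the Borsuk--Ulam theorem an equivariant map \(\Sphere^m\to Y\) into a free complex of dimension \(d\) forces \(m\le d\). Conversely, equivariant obstruction theory gives an equivariant map \(\Sphere^{n+1}\to Y\) whenever \(Y\) is \(n\)-connected, since the obstructions live in \(\pi_i(Y)\) for \(i\le n\). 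Also \(\coind(\mathcal K)\ge 1\) holds for every connected free \(\Ztwo\)-complex: join \(x\) to \(-x\) by a path \(\gamma\) and close it up with \(-\gamma\). So it suffices to build a connected \(n\)-dimensional finite free \(\Ztwo\)-complex \(\mathcal K\) with two properties:
\begin{enumerate}
\item[(a)] \(\tilde H_i(\mathcal K)=0\) for \(i\le n-1\);
\item[(b)] there is no equivariant map \(\Sphere^2\to\mathcal K\).
\end{enumerate}
Given (a), \(\susp(\mathcal K)\) is simply connected (van Kampen) and has \(\tilde H_i=0\) for \(i\le n\). Hurewicz then makes it \(n\)-connected, so \(\coind(\susp(\mathcal K))\ge n+1\).

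For (b) I would use the following lemma. Suppose the quotient \(Y=\mathcal K/\Ztwo\) has contractible universal cover \(\widetilde Y\), so its fundamental group \(\Gamma\) is torsion-free, with \(\pi_1(\mathcal K)=H\) of index \(2\) in \(\Gamma\). Then \(\coind(\mathcal K)\le 1\). Indeed, let \(f\colon\Sphere^2\to\mathcal K\) be equivariant. Since \(\Sphere^2\) is simply connected, \(f\) lifts to \(\tilde f\colon\Sphere^2\to\widetilde Y\). Then \(\tilde f(-x)=g\,\tilde f(x)\) for a fixed deck transformation \(g\in\Gamma\setminus H\), using connectedness of \(\Sphere^2\) and uniqueness of lifts. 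Applying this relation twice gives \(g^2\tilde f=\tilde f\), so \(g^2\) has a fixed point and therefore \(g^2=1\). Torsion-freeness gives \(g=1\), which contradicts \(g\notin H\).

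So the core is group-theoretic: find a torsion-free group \(\Gamma\) with a finite aspherical classifying complex, together with an index-\(2\) subgroup \(H\) that is acyclic, or at least has \(H_i(H)=0\) for \(1\le i\le n-1\). Take \(\mathcal K\) to be the double cover of \(B\Gamma\) associated with \(H\); then (a) and (b) hold. My first attempt would be a Kan--Thurston / Baumslag--Dyer--Heller construction carried out \(\Ztwo\)-equivariantly. One route starts from a free \(\Ztwo\)-action on a sphere, or any finite free \(\Ztwo\)-complex, and replaces it, by an equivariant version of the Kan--Thurston or Maunder construction, with an aspherical complex having the same homology. Another route starts from an acyclic group with a finite aspherical \(2\)-dimensional presentation complex, such as Higman's group, and builds a torsion-free \(\Ztwo\)-extension that is still aspherical. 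For instance, one could look for an amalgam or HNN-type construction in which the involution swaps two acyclic pieces without creating \(H_1\). I expect this step to be the main obstacle. The naive choices, for example \(K_0\vee S^1\) with \(K_0\) acyclic, introduce an unwanted \(H_1\), and that \(H_1\) must be killed while keeping asphericity and finiteness.

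Finally, to make \(\dim\mathcal K=n\) exactly when the group-theoretic complex has smaller dimension, I would attach a free orbit of \(n\)-spheres. Join one copy of \(\Sphere^n\) to \(\mathcal K\) by an arc, and add its image under the involution. Arcs attached at distinct free points create no new loops, so (a) survives for \(n\ge 2\). Property (b) also survives: collapsing each sphere-plus-arc to its attaching point is an equivariant retraction onto the old complex, so any equivariant map \(\Sphere^2\to\mathcal K\) would compose to one into the aspherical part, contradicting the lemma. Subdividing then gives a finite simplicial complex with a free simplicial involution, as required in Theorem~\ref{thm:main}.
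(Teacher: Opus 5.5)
\textbf{The reduction is sound; the construction is missing.} Your criterion is a slight generalization of Lemma~\ref{lem:criterion}: $\mathcal K$ connected and $n$-dimensional, with $\widetilde H_i(\mathcal K)=0$ for $i\le n-1$, and with no equivariant map $\Sphere^2\to\mathcal K$. Your deck-transformation argument for $\coind(\mathcal K)\le 1$ is a valid alternative to the paper's route of extending over hemispheres up to $\Sphere^{n+1}$ and contradicting Borsuk--Ulam. In your argument you lift to $\widetilde Y$, find $g\notin H$ with $g^2$ fixing a point, so $g^2=1$, which contradicts torsion-freeness.

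The genuine gap is that you never produce the complex. Everything hinges on a finite free $\Ztwo$-complex of dimension $n$ with $\widetilde H_{<n}=0$ and $\pi_1(\mathcal K/\Ztwo)$ torsion-free, for example $\mathcal K$ aspherical. You leave this as ``the main obstacle'' and list routes without carrying any out. That existence statement is the whole nontrivial content.

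An ``equivariant Kan--Thurston construction'' cannot simply be invoked. The paper applies Leary's metric Kan--Thurston theorem to the boundary of the $(n+1)$-crosspolytope with its antipodal map $a$, and it needs three facts:
\begin{enumerate}
\item[(i)] $T_X$ is finite, locally CAT(0) and hence aspherical, a homology $\Sphere^n$, and $n$-dimensional when $n\ge 3$.
\item[(ii)] The construction is functorial for simplicial maps injective on simplices, which yields an involution $T(a)$ with $T(a)^2=T(a^2)=\mathrm{id}$.
\item[(iii)] The naturality $t_X\circ T(a)=a\circ t_X$ (Lemma~\ref{lem:leary-naturality}) makes $T(a)$ free, since a fixed point would map to a fixed point of $a$.
\end{enumerate}
Without (ii) and (iii) you have neither an involution nor freeness. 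Leary's construction also turns dimension $2$ into dimension $3$, so this route misses $n=2$; the paper takes that case from the surfaces of Simonyi, Tardos, and Vre\'cica.

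Your other route would, together with your sphere-wedging trick, cover all $n\ge 2$ at once. It needs a torsion-free $\Gamma$ with a finite $K(\Gamma,1)$ of dimension at most $n$ containing an acyclic subgroup of index $2$. You exhibit no such $\Gamma$. The obvious split extensions $H\rtimes\Ztwo$ contain $2$-torsion, so a non-split extension would be required.

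\textbf{The passage to graphs is also missing.} The statement is about graphs, and your proposal stops at complexes. To finish, you need three steps:
\begin{enumerate}
\item[(1)] Csorba's universality theorem gives $G_n$ with $\B(G_n)\simeq_{\Ztwo}\mathcal K$. This is why $\mathcal K$ must be a finite free \emph{simplicial} $\Ztwo$-complex.
\item[(2)] Csorba's equivalence gives $\B_0(G_n)\simeq_{\Ztwo}\susp(\B(G_n))\simeq_{\Ztwo}\susp(\mathcal K)$.
\item[(3)] $\coind$ is invariant under $\Ztwo$-homotopy equivalence.
\end{enumerate}
Once Theorem~\ref{thm:main} is available, this combination is exactly the paper's proof of the corollary, so it should be stated explicitly.
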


Thus the chromatic lower bound obtained from the \(\B_0(G)\)-bound, namely
\eqref{eq:box0}, can be arbitrarily stronger than the \(\B(G)\)-bound, namely \eqref{eq:box}. Together with other useful properties of \(\B_0(G)\), such as the decidability of its connectivity
\cite[Theorem~5.1]{Daneshpajouh2025} and its natural behavior under joins
\cite[Proposition~7.3]{Daneshpajouh2025}, this suggests that \(\B_0(G)\) can be
a more effective practical invariant than \(\B(G)\) in certain situations.

As another application, we obtain a new family of non-tidy spaces. Recall that for
a free \(\Ztwo\)-space \(X\), the index \(\ind(X)\) is the least integer \(r\)
for which there exists a \(\Ztwo\)-equivariant map
\(
X\rightarrow \Sphere^r
\). By the Borsuk--Ulam theorem,
\begin{equation}
\coind(X)\leq \ind(X).
\label{eq:coind-ind}
\end{equation}
Following Matou\v{s}ek~\cite{matousek2008using}, a free \(\Ztwo\)-space is
called non-tidy if this inequality is strict. Examples of non-tidy spaces were
previously known; see, for instance, \cite[pp.~100--101]{matousek2008using}, \cite{matsushita2017examples}
and~\cite{daneshpajouh2023hedetniemi}.

Theorem~\ref{thm:main} gives another source of finite non-tidy complexes with
arbitrarily large gap between index and co-index. Let \(\mathcal K\) be the
\(n\)-dimensional complex from Theorem~\ref{thm:main}. Then,  \(\ind(\susp(\mathcal K))\geq n+1\) by \eqref{eq:coind-ind}. Since the index of a finite free \(\Ztwo\)-simplicial complex is always bounded above by its dimension~\cite[Proposition 5.3.2]{matousek2008using}, it follows that
\(
\ind(\susp(\mathcal{K})) = n+1
\). Moreover, suspending an equivariant map \(X\to \Sphere^r\) gives an equivariant
map
\(
\susp(X)\rightarrow \Sphere^{r+1},
\)
and therefore
\begin{equation}
\ind(\susp(X))\leq \ind(X)+1.
\label{eq:ind-suspension}
\end{equation}
Hence
\(
n+1=\ind(\susp(\mathcal K))\leq \ind(\mathcal K)+1,
\)
so \(\ind(\mathcal K)\geq n\). Since \(\dim\mathcal K=n\), again by the
dimension upper bound, \(\ind(\mathcal K)\leq n\). Therefore
\(\ind(\mathcal K)=n\), and we obtain the following corollary.

\begin{corollary}\label{cor:nontidy}
For every integer \(n\geq 2\), there exists a finite \(n\)-dimensional
simplicial complex \(\mathcal K\) with a free \(\Ztwo\)-action such that
\[
\coind(\mathcal K)=1
\qquad\text{and}\qquad
\ind(\mathcal K)=n.
\]
In particular, finite non-tidy free \(\Ztwo\)-complexes exist with arbitrarily
large gap between index and co-index.
\end{corollary}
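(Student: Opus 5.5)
The plan is to deduce Corollary~\ref{cor:nontidy} directly from Theorem~\ref{thm:main}, using only the Borsuk--Ulam inequality \eqref{eq:coind-ind}, the behaviour of the index under suspension \eqref{eq:ind-suspension}, and the dimension bound for the index of finite free simplicial \(\Ztwo\)-complexes \cite[Proposition 5.3.2]{matousek2008using}. Fix \(n\geq 2\) and let \(\mathcal K\) be the finite free \(n\)-dimensional simplicial \(\Ztwo\)-complex provided by Theorem~\ref{thm:main}. Then \(\coind(\mathcal K)=1\) and \(\coind(\susp(\mathcal K))=n+1\). The first equality is already half of the claim, so it remains to show \(\ind(\mathcal K)=n\).

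For the lower bound, apply \eqref{eq:coind-ind} to \(\susp(\mathcal K)\), which is again a free \(\Ztwo\)-space. This gives \(\ind(\susp(\mathcal K))\geq \coind(\susp(\mathcal K))=n+1\). Next, suspending an equivariant map \(\mathcal K\to\Sphere^{r}\) with \(r=\ind(\mathcal K)\) gives an equivariant map \(\susp(\mathcal K)\to\susp(\Sphere^r)\cong_{\Ztwo}\Sphere^{r+1}\); this is \eqref{eq:ind-suspension}. Hence \(n+1\leq \ind(\mathcal K)+1\), that is, \(\ind(\mathcal K)\geq n\).

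For the upper bound, \(\mathcal K\) is a finite free simplicial \(\Ztwo\)-complex of dimension \(n\). The dimension bound therefore gives \(\ind(\mathcal K)\leq n\), and so \(\ind(\mathcal K)=n\). Finally, since \(n\geq 2>1=\coind(\mathcal K)\), the complex \(\mathcal K\) is non-tidy, and its gap is \(\ind(\mathcal K)-\coind(\mathcal K)=n-1\), which is unbounded as \(n\) grows.

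There is no real obstacle in this deduction: all the substance lies in Theorem~\ref{thm:main}, which is assumed. The only point needing care is that the suspension convention of the paper, \(-[x,t]=[-x,-t]\), identifies \(\susp(\Sphere^r)\) equivariantly with the antipodal \(\Sphere^{r+1}\) and preserves freeness. This is what justifies applying \eqref{eq:coind-ind} and \eqref{eq:ind-suspension} to \(\susp(\mathcal K)\).
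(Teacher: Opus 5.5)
Your proposal is correct and follows essentially the same deduction as the paper. You obtain \(\ind(\mathcal K)\geq n\) from \(\coind(\susp(\mathcal K))=n+1\) via \eqref{eq:coind-ind} and \eqref{eq:ind-suspension}, and \(\ind(\mathcal K)\leq n\) from the dimension bound. The only difference is that the paper first records \(\ind(\susp(\mathcal K))=n+1\) using the dimension bound for \(\susp(\mathcal K)\); as your argument shows, that intermediate equality is not needed.
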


The examples in Theorem~\ref{thm:main} all have co-index \(1\). We end the introduction with a simple modification that gives the following
more flexible realization statement.

\begin{corollary}\label{cor:flexible}
For any integers \(m\geq 1,n\geq 2\) with \(1\leq m\leq n-1\), there exists a finite
simplicial complex \(\mathcal K^*\) with a free \(\Ztwo\)-action such that
\[
\coind(\mathcal K^*)=m
\qquad\text{and}\qquad
\coind(\susp(\mathcal K^*))=n+1.
\]
\end{corollary}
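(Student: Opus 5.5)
Take \(\mathcal K^*:=\mathcal K\sqcup \Sphere^m\), where \(\mathcal K\) is the complex of Theorem~\ref{thm:main} for the given \(n\), \(\Sphere^m\) is triangulated as the boundary of the \((m+1)\)-dimensional cross-polytope with the antipodal action, and the involution acts on each component separately. This is a finite free simplicial \(\Ztwo\)-complex. If one prefers a connected example, one can instead attach an equivariant pair of arcs joining a vertex \(v\) of \(\mathcal K\) to a vertex \(w\) of \(\Sphere^m\) and \(-v\) to \(-w\). I would run the argument for the disjoint union and only adapt it if connectivity is needed.

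\emph{Co-index of \(\mathcal K^*\).} The identity \(\Sphere^m\to\Sphere^m\subset \mathcal K^*\) gives \(\coind(\mathcal K^*)\ge m\). For the reverse inequality, take an equivariant map \(f:\Sphere^{k}\to\mathcal K^*\) with \(k\ge m+1\ge 2\). Since \(\Sphere^k\) is connected, \(f\) lands in one component. Both components are invariant, so \(f\) is an equivariant map into either \(\mathcal K\) or \(\Sphere^m\). A map into \(\Sphere^m\) with \(k>m\) is impossible by Borsuk--Ulam. A map into \(\mathcal K\) is impossible because \(k\ge2>\coind(\mathcal K)=1\). Hence \(\coind(\mathcal K^*)=m\).

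\emph{Co-index of the suspension.} The inclusion \(\mathcal K\hookrightarrow\mathcal K^*\) is equivariant, so it suspends to an equivariant map \(\susp(\mathcal K)\to\susp(\mathcal K^*)\). Composing with the map \(\Sphere^{n+1}\to\susp(\mathcal K)\) from Theorem~\ref{thm:main} gives \(\coind(\susp(\mathcal K^*))\ge n+1\). For the upper bound, \(\dim\mathcal K^*=\max(n,m)=n\), so \(\dim\susp(\mathcal K^*)=n+1\). By the dimension bound \cite[Proposition 5.3.2]{matousek2008using} and \eqref{eq:coind-ind},
\[
\coind(\susp(\mathcal K^*))\le \ind(\susp(\mathcal K^*))\le n+1.
\]
Here \(\susp(\mathcal K^*)\) is understood as a free simplicial \(\Ztwo\)-complex; it is realized as such by the simplicial suspension with two antipodal cone vertices, which is \(\Ztwo\)-homeomorphic to the topological suspension defined in the introduction. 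This step uses \(m\le n-1\), so that \(\dim\mathcal K^*=n\).

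\emph{Main obstacle.} Nothing in this argument is deep. The point that needs care is the upper bound on \(\coind(\mathcal K^*)\): it relies on the connectedness argument and on \(m\ge1\), so that any \(\Sphere^k\) with \(k\ge m+1\) is connected. If \(\mathcal K^*\) is made connected by arcs, this clean splitting is lost. One would then have to argue that the attached arcs change the equivariant homotopy type only by wedge-type gluing and do not raise the co-index, for instance by an equivariant retraction argument. That is why I would keep the disjoint union.
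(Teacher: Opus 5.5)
Your proposal is correct and follows the paper's proof essentially verbatim. You use the same complex \(\mathcal K^*=\mathcal K\sqcup\Sphere^m\), get the lower bound from the equivariant inclusion \(\susp(\mathcal K)\to\susp(\mathcal K^*)\), and get the upper bound from the dimension bound with \(\dim\mathcal K^*=n\). The one difference is that you justify \(\coind(\mathcal K^*)=\max\{1,m\}\) by noting that an equivariant map from the connected sphere \(\Sphere^k\), \(k\ge 1\), lands in a single invariant piece, whereas the paper simply asserts the disjoint-union formula.
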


\begin{proof}
Let \(\mathcal K\) be the complex given by Theorem~\ref{thm:main} in dimension
\(n\). Thus
\(\coind(\mathcal K)=1\), and \(\coind(\susp(\mathcal K))=n+1\). Define
\(
\mathcal K^* = \mathcal K\,\sqcup\, \Sphere^m
\), where \(\Sphere^m\) carries the antipodal action, and equip \(\mathcal K^*\)
with the induced free \(\Ztwo\)-action on each component.

Since the co-index of a disjoint union is the maximum of the co-indices of its
components, we have
\[
\coind(\mathcal K^*)
=
\max\{\coind(\mathcal K),\coind(\Sphere^m)\}
=
\max\{1,m\}
=
m.
\]

It remains to compute the co-index of \(\susp(\mathcal K^*)\). Since
\(\mathcal K\subseteq \mathcal K^*\), there is an equivariant inclusion
\(
\susp(\mathcal K)\rightarrow \susp(\mathcal K^*)
\). Therefore
\[
\coind(\susp(\mathcal K^*))\geq \coind(\susp(\mathcal K))=n+1.
\]
On the other hand, since \(m\leq n-1\), we have
\[
\dim \mathcal K^*=\max\{\dim \mathcal K,m\}=n.
\]
Hence \(\dim \susp(\mathcal K^*)=n+1\). Since the co-index of a finite free \(\Ztwo\)-complex is bounded above by its
dimension~\cite[Proposition~5.3.2]{matousek2008using}, we get
\(
\coind(\susp(\mathcal K^*))\leq n+1
\). Combining the two inequalities gives \(
\coind(\susp(\mathcal K^*))=n+1
\).
\end{proof}

\section{Proof of the Main Result}

The proof of Theorem~\ref{thm:main} proceeds in two steps. We first establish a
topological criterion ensuring that a finite free \(\Ztwo\)-complex
\(\mathcal K\) has
\(
\coind(\mathcal K)=1\), and \(\coind(\susp(\mathcal K))=\dim \mathcal K+1\).
This criterion is similar to a proposition used by the author and Meunier
in~\cite[Proposition 8.1]{Daneshpajouh2025}; it provides a mechanism for making the co-index of
the suspension as large as possible while the original complex still has
co-index one. We then prove that such complexes exist by applying Leary's metric Kan--Thurston theorem~\cite{Leary} to 
spheres. 

\begin{lemma}\label{lem:criterion}
Let \(\mathcal K\) be a finite connected free simplicial \(\Ztwo\)-complex of
dimension \(n\). Assume that:
\begin{enumerate}
    \item \(\mathcal K\) is aspherical, that is,
    \(
    \pi_i(\mathcal K)=0 \qquad \text{for all } i\geq 2;
    \)
    \item
    \(
    \widetilde H_i(\mathcal K;\Z)=0 \qquad \text{for all } i<n.
    \)
\end{enumerate}
Then
\(
\coind(\mathcal K)=1\), and \(\coind(\susp(\mathcal K))=n+1\).
\end{lemma}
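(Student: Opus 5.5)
The plan is to prove the two equalities separately. The first uses asphericity through a torsion argument in the fundamental group of the orbit space. The second uses the homology vanishing, via the Hurewicz theorem, to make \(\susp(\mathcal K)\) highly connected.

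\emph{Step 1: \(\coind(\mathcal K)\ge 1\) and \(\coind(\susp(\mathcal K))\le n+1\).}
Since \(\mathcal K\) is connected and the action is free, choose \(x_0\) and a path \(\gamma\) from \(x_0\) to \(-x_0\). Sending the upper half-circle of \(\Sphere^1\) along \(\gamma\) and the lower half along \(-\gamma\) gives an equivariant map \(\Sphere^1\to\mathcal K\), so \(\coind(\mathcal K)\ge 1\). Since \(\dim\susp(\mathcal K)=n+1\), the dimension bound \cite[Proposition~5.3.2]{matousek2008using} already used in the introduction gives \(\coind(\susp(\mathcal K))\le n+1\).

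\emph{Step 2: \(\coind(\mathcal K)\le 1\).}
Suppose \(f\colon\Sphere^2\to\mathcal K\) is equivariant. Let \(p\colon\widetilde{\mathcal K}\to\mathcal K\) be the universal cover; it is contractible by asphericity. Since \(\Sphere^2\) is simply connected, \(f\) lifts to \(\tilde f\colon\Sphere^2\to\widetilde{\mathcal K}\). Composing \(p\) with the quotient map \(\mathcal K\to\mathcal K/\Ztwo\) gives a regular covering \(q\colon\widetilde{\mathcal K}\to\mathcal K/\Ztwo\). Its deck group is \(\pi_1(\mathcal K/\Ztwo)\), and all of its nontrivial elements act freely. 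Pick the deck transformation \(g\) of \(q\) with \(g\tilde f(x_0)=\tilde f(-x_0)\). Then \(g\circ\tilde f\) and \(\tilde f\circ(-\mathrm{id})\) are both lifts of the same map \(q\circ\tilde f\circ(-\mathrm{id})\) through \(q\), and they agree at \(x_0\). By uniqueness of lifts, \(g\tilde f(x)=\tilde f(-x)\) for all \(x\). It follows that \(g^2\tilde f=\tilde f\), so \(g^2\) fixes a point and hence \(g^2=1\). Moreover \(g\ne 1\), because \(g\) lies over the nontrivial involution of \(\mathcal K\).

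So \(\pi_1(\mathcal K/\Ztwo)\) contains an element of order \(2\). On the other hand, \(\mathcal K/\Ztwo\) is a finite complex whose universal cover \(\widetilde{\mathcal K}\) is contractible, so it is a finite-dimensional \(K(\pi,1)\). Hence \(\pi_1(\mathcal K/\Ztwo)\) has finite cohomological dimension and is torsion-free, since a cyclic subgroup \(\Z/2\) would have nonzero cohomology in infinitely many degrees. This contradiction gives \(\coind(\mathcal K)=1\). I expect this step to be the main obstacle: the key is to realize the \(\Ztwo\)-action on the universal cover as a genuine order-two deck transformation.

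\emph{Step 3: \(\coind(\susp(\mathcal K))\ge n+1\).}
Since \(\mathcal K\) is connected, \(\susp(\mathcal K)\) is simply connected by van Kampen. By the suspension isomorphism, \(\widetilde H_i(\susp(\mathcal K))\cong\widetilde H_{i-1}(\mathcal K)=0\) for \(i\le n\). The Hurewicz theorem then shows that \(\susp(\mathcal K)\) is \(n\)-connected. Now give \(\Sphere^{n+1}\) the standard free \(\Ztwo\)-CW structure with two antipodal cells in each dimension \(0,\dots,n+1\). Build an equivariant map skeleton by skeleton: for each antipodal pair of \(k\)-cells, extend over one cell, which is possible because \(\pi_{k-1}(\susp(\mathcal K))=0\) for \(k\le n+1\), and define the map on the other cell by equivariance. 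This gives an equivariant map \(\Sphere^{n+1}\to\susp(\mathcal K)\). Combined with Step 1, \(\coind(\susp(\mathcal K))=n+1\).
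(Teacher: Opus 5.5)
Your proof is correct. Steps 1 and 3 match the paper: the path from \(x_0\) to \(-x_0\), the dimension bound for \(\coind(\susp(\mathcal K))\le n+1\), and Hurewicz followed by cell-by-cell equivariant extension over \(\Sphere^{n+1}\).

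Your Step 2 takes a genuinely different route. The paper rules out an equivariant \(f_2\colon\Sphere^2\to\mathcal K\) as follows. It uses \(\pi_i(\mathcal K)=0\) for \(i\ge 2\) to extend \(f_2\), hemisphere by hemisphere, to equivariant maps \(\Sphere^\ell\to\mathcal K\) for every \(\ell\). It then composes with an equivariant map \(\mathcal K\to\Sphere^n\), supplied by the dimension bound, and gets a contradiction with Borsuk--Ulam. This uses only tools the lemma already needs, and the same extension mechanism as the suspension half.

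You instead work at the level of \(\pi_1\). Lifting \(f\) to the contractible universal cover produces a deck transformation \(g\) of \(\widetilde{\mathcal K}\to\mathcal K/\Ztwo\) of order exactly two. This is impossible because \(\mathcal K/\Ztwo\) is a finite-dimensional \(K(\pi,1)\), so its fundamental group is torsion-free. Your checks that \(g^2=1\) and \(g\ne 1\) via uniqueness of lifts are complete.

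Your route imports a group-cohomology fact, namely that \(H^*(\Z/2;\Z/2)\) is nonzero in every degree. In exchange, it isolates the actual obstruction. An equivariant map from \(\Sphere^2\) (or from any simply connected free \(\Ztwo\)-space) forces the involution to lift to an order-two deck transformation of \(\widetilde{\mathcal K}\), i.e.\ it forces \(1\to\pi_1(\mathcal K)\to\pi_1(\mathcal K/\Ztwo)\to\Ztwo\to 1\) to split.

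Both arguments ultimately rest on the same fact: no finite-dimensional contractible complex carries a free involution. The paper proves this via Borsuk--Ulam, and you prove it via the cohomology of \(\Z/2\).
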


\begin{proof}
Since \(\widetilde H_0(\mathcal K;\Z)=0\), the complex \(\mathcal K\) is path
connected. Choose a point \(b\in \mathcal K\) and a path \(\gamma\) from \(b\)
to its antipode \(-b\). Mapping one semicircle of \(\Sphere^1\) to \(\gamma\)
and the opposite semicircle to its \(\Ztwo\)-translate gives a
\(\Ztwo\)-equivariant map \(\Sphere^1\rightarrow \mathcal K\). Hence \(\coind(\mathcal K)\geq 1\).

We prove that \(\coind(\mathcal K)\leq 1\). Suppose, to the contrary, that
there exists a \(\Ztwo\)-equivariant map \(f_2\colon \Sphere^2\rightarrow \mathcal K\). Since \(\pi_2(\mathcal K)=0\), \(f_2\) extends over a \(3\)-ball. Viewing
\(\Sphere^3\) as the union of two closed hemispheres exchanged by the antipodal
involution, we extend \(f_2\) over one hemisphere and define the map on the
other hemisphere by equivariance. This gives a \(\Ztwo\)-equivariant map \(
f_3\colon \Sphere^3\rightarrow \mathcal K\). Repeating the same argument, using \(\pi_i(\mathcal K)=0\) for all
\(i\geq 2\), we obtain \(\Ztwo\)-equivariant maps
\[
\Sphere^\ell\rightarrow \mathcal K
\qquad\text{for every } \ell\geq 2.
\]

This is impossible. Indeed, since \(\mathcal K\) is a finite free
\(\Ztwo\)-complex of dimension \(n\), there exists a \(\Ztwo\)-equivariant map
\(
\mathcal K\rightarrow \Sphere^n
\)
by~\cite[Proposition~5.3.2]{matousek2008using}. Composing such a map with the
equivariant map \(\Sphere^{n+1}\to \mathcal K\) obtained above gives a
\(\Ztwo\)-equivariant map \(\Sphere^{n+1}\rightarrow \Sphere^n,\)
contradicting the Borsuk--Ulam theorem. Hence \(\coind(\mathcal K)=1\).

We now consider the suspension \(\susp(\mathcal K)\). Since reduced homology
shifts under suspension,
\[
\widetilde H_j(\susp(\mathcal K);\Z)
\cong
\widetilde H_{j-1}(\mathcal K;\Z)
\qquad\text{for all } j.
\]
By assumption,
\[
\widetilde H_j(\susp(\mathcal K);\Z)=0
\qquad\text{for } 1\leq j\leq n.
\]
Moreover, since \(\mathcal K\) is connected, \(\susp(\mathcal K)\) is simply
connected. The Hurewicz theorem therefore implies that
\[
\pi_j(\susp(\mathcal K))=0
\qquad\text{for } 1\leq j\leq n.
\]

Starting from the tautological equivariant map \(
\Sphere^0\rightarrow \susp(\mathcal K),\)
which sends the two points of \(\Sphere^0\) to the two suspension points, the
same extension-over-hemispheres argument gives, inductively, a
\(\Ztwo\)-equivariant map \(
\Sphere^{n+1}\rightarrow \susp(\mathcal K)\).
Thus
\[
\coind(\susp(\mathcal K))\geq n+1.
\] 
On the other hand, \(\susp(\mathcal K)\) is a finite free simplicial
\(\Ztwo\)-complex of dimension \(n+1\), and therefore \(\coind(\susp(\mathcal K))\leq\dim(\susp(\mathcal K))= n+1\) by~\cite[Proposition 5.3.2]{matousek2008using}.
Consequently,
\(
\coind(\susp(\mathcal K))=n+1
\).
\end{proof}

We now prove the existence of complexes satisfying the hypotheses of
Lemma~\ref{lem:criterion}. The construction uses Leary's metric
Kan--Thurston theorem~\cite{Leary}. We first recall the part of the theorem that is relevant to our present discussion.

\begin{theorem}[Leary's metric Kan--Thurston theorem {\cite[Theorem~A]{Leary}}]
\label{thm:leary}
Let \(X\) be a simplicial complex. There is a locally CAT(0) cubical complex
\(T_X\) and a map
\[
t_X:T_X\rightarrow X
\]
with the following properties.
\begin{enumerate}
    \item The map \(t_X\) induces an isomorphism on homology for any local
    coefficients on \(X\).
    \item If \(X\) is finite, then \(T_X\) is finite. The dimension of \(T_X\) is equal
    to the dimension of \(X\), except when \(X\) is two-dimensional, in which case
    \(T_X\) is three-dimensional.
\end{enumerate}
Moreover, the construction is functorial in the following sense:
\begin{enumerate}
    \item[(i)] If
\(
f:X\rightarrow Y
\)
is a simplicial map which is injective on each simplex, then there is an
induced cubical map
\(
T(f):T_X\rightarrow T_Y.
\)
\item[(ii)] 
If \(f\) is injective, then \(T(f)\) embeds \(T_X\) isometrically as a totally
geodesic subcomplex of \(T_Y\). If \(f\) is locally injective, then \(T(f)\) is
locally isometric.
\end{enumerate}
If \(X\) is the union of subcomplexes \(X_\alpha\), then \(T_X\) is equal to the
union of the corresponding subcomplexes \(T_{X_\alpha}\). 
\end{theorem}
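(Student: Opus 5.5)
This theorem is quoted from \cite{Leary} and is used as a black box in what follows. If I had to reprove it, I would build \(T_X\) simplex by simplex from a single family of universal building blocks and verify each property by induction over skeleta. The plan has three steps, in this order.

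\emph{Step 1: building blocks.} For each \(k\geq 0\) I would construct a finite locally CAT(0) cubical complex \(T_k\) with a map \(T_k\to\Delta^k\). For each face \(\Delta^j\subseteq\Delta^k\), \(T_k\) should contain a totally geodesic copy of \(T_j\) lying over that face, and these copies should be compatible with face inclusions. I would construct them inductively, with \(T_0\) a point. Write \(\partial T_k\) for the union of the copies of \(T_{k-1}\) over the facets, and \(T_k\) for the whole block. The key requirements are:
\begin{enumerate}
\item \(T_k\) is acyclic;
\item \(H_*(T_k,\partial T_k)\cong H_*(\Delta^k,\partial\Delta^k)\), with the isomorphism induced by the map to \(\Delta^k\).
\end{enumerate}
Since every local system on a simplex is trivial, (1) and (2) are exactly what is needed on a single cell. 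Because asphericity replaces contractibility, \(T_k\) should be a locally CAT(0) cube complex with acyclic fundamental group. I would take it as a Davis-type or cubical "cone" construction over \(\partial T_k\). One makes \(\partial T_k\) totally geodesic by choosing a suitable Gromov-flag (right-angled) structure on the attaching region. One then kills homology by adding cubes along acyclic pieces, for example products with acyclic CAT(0) cube complexes such as Higman-group-type examples, checking the link condition at every step.

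\emph{Step 2: gluing.} Define \(T_X\) as the union of the blocks \(T_\sigma\) over simplices \(\sigma\) of \(X\), glued along the face copies. Local CAT(0)-ness is checked vertex-wise: links in the glued complex are unions of flag links glued along full subcomplexes, hence still flag. The gluing formula for subcomplexes \(X_\alpha\), the finiteness statement, and the functoriality (i)--(ii) follow from naturality of the blocks. A simplicial map that is injective on each simplex identifies \(T_\sigma\) with \(T_{f(\sigma)}\). Injectivity of \(f\) gives an embedding onto a totally geodesic subcomplex, and local injectivity gives a local isometry. Injectivity on simplices is needed precisely so that the face identifications are respected.

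\emph{Step 3: homology.} For property (1) I would induct over skeleta of \(X\), using the long exact sequences of the pairs \((T_{X^{(k)}},T_{X^{(k-1)}})\) and \((X^{(k)},X^{(k-1)})\) with local coefficients pulled back along \(t_X\). By excision, the relative terms are direct sums over \(k\)-simplices of \(H_*(T_k,\partial T_k)\) with constant coefficients, and these agree with the simplicial ones by (2). The five lemma then finishes the induction; for infinite \(X\), pass to a direct limit.

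\emph{Main obstacle.} I expect the hard part to be Step 1: producing acyclic, non-contractible, locally CAT(0) blocks whose boundary is totally geodesic and of the same dimension \(k\). This is also where the dimension exception should come from. In dimension \(2\) one apparently cannot realise an acyclic block with the right boundary as a locally CAT(0) square complex, so a third dimension is needed. I would first try to adapt Leary's use of explicit acyclic cube complexes, checking the link condition directly.
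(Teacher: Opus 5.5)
There is no proof in the paper to measure this against. The statement is quoted from Leary's Theorem~A and used as a black box, as you say. The only thing the paper records about the construction (in the proof of Lemma~\ref{lem:leary-naturality}) is that \(T_X\) is assembled simplex by simplex, with a mapping-cylinder piece attached over each \(\sigma\) whose terminal part \(t_X\) sends to the barycentre. Your Steps~2--3 fit that outline, and the skeletal induction with local coefficients in Step~3 is sound.

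As a reproof, however, the sketch has a genuine gap: Step~1 \emph{is} the theorem, and you do not carry it out. Because \(\partial T_k\) is to be locally convex in the aspherical acyclic complex \(T_k\), Step~1 amounts to the following: embed \(T_{\partial\Delta^k}\), by a local isometry (hence \(\pi_1\)-injectively), into a finite locally CAT(0) cube complex with acyclic fundamental group, naturally and within the dimension bound. This is a metric substitute for the Baumslag--Dyer--Heller embedding of a group into an acyclic group, and the ingredients you name do not supply it.
\begin{itemize}
\item A CAT(0) cube complex is contractible, so what you actually need are compact nonpositively curved complexes with acyclic \(\pi_1\).
\item Higman's group is not the fundamental group of any such complex. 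It contains \(\mathrm{BS}(1,2)\), and for a group acting properly and cocompactly on a CAT(0) space, a relation \(tat^{-1}=a^2\) gives \(\ell(a)=\ell(a^2)=2\ell(a)\) for the translation length. So \(a\) is elliptic and therefore of finite order.
\item Products with a non-contractible aspherical acyclic factor (which has dimension at least \(2\)) work against \(\dim T_k=k\).
\item The \(k=2\) exception is only guessed at.
\end{itemize}
So neither the existence of the blocks nor the dimension statement is established.

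Two further points need repair even granting the blocks.

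First, compatibility with face inclusions only makes \(T\) functorial for maps that preserve a vertex ordering. For (i) in general, and for the naturality \(t_Y\circ T(f)=f\circ t_X\) that the paper needs in Lemma~\ref{lem:leary-naturality}, each \(T_k\) must carry an action of \(S_{k+1}\) permuting the face copies, with \(T_k\to\Delta^k\) equivariant. For example, no vertex ordering of \(\partial\Delta^2\) is preserved on all three edges by its rotation.

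Second, the principle in Step~2 that ``unions of flag links glued along full subcomplexes are flag'' is false. Three edges forming an empty triangle, meeting pairwise in single vertices, are a counterexample. Exactly this link appears at a vertex of \(T_{\partial\Delta^3}\) if the \(2\)-blocks have square corners. What actually works is your requirement that the whole of \(\partial T_\sigma\) be locally convex in \(T_\sigma\). It forces every edge of a vertex link of \(T_X\) to join two vertices whose carrier simplices are nested, so every clique lies in the link of a single block. The same carrier argument gives the total geodesicity claimed in (ii).
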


We shall apply Theorem~\ref{thm:leary} to an antipodally triangulated sphere.
Let \(X=\Sphere^n\) be the boundary complex of the \((n+1)\)-dimensional
crosspolytope, and let
\(
a:X\rightarrow X
\)
be the antipodal map. Then \(a\) is a fixed-point-free simplicial involution.
Since \(a\) is a simplicial automorphism, Leary's functoriality gives an
induced cubical involution
\[
T(a):T_X\rightarrow T_X.
\]
The complex \(T_X\) will provide the required homological and homotopical
properties. The only remaining issue is to show that the induced involution
\(T(a)\) is fixed-point-free. This follows from the naturality of the maps
\(t_X:T_X\to X\) with respect to the induced maps \(T(f)\). Although this
naturality is not stated separately in Leary's Theorem~A, it follows from the
functorial construction in \cite[Theorem~5.1]{Leary}. We record the argument
below.

\begin{lemma}\label{lem:leary-naturality}
Let \(f:X\to Y\) be a simplicial map which is injective on each simplex. Then
the induced cubical map
\[
T(f):T_X\rightarrow T_Y
\]
is compatible with the maps \(t_X\) and \(t_Y\), in the sense that
\[
t_Y\circ T(f)=f\circ t_X.
\]
In other words, the following diagram commutes:
\[
\begin{CD}
T_X @>{T(f)}>> T_Y\\
@V{t_X}VV       @VV{t_Y}V\\
X @>{f}>> Y .
\end{CD}
\]
Moreover, the assignment \(X\mapsto T_X\), \(f\mapsto T(f)\), is functorial on
the category of simplicial maps which are injective on each simplex.
\end{lemma}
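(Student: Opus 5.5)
The plan is to open up Leary's construction in \cite[Theorem~5.1]{Leary} and check that both the space \(T_X\) and the map \(t_X\) are built simplex by simplex, with each piece depending only on the isomorphism type of the simplex. The relevant features are these. Leary first fixes, for each \(k\), a model cubical complex \(T_k\) together with a map \(t_k\colon T_k\to\Delta^k\) to the standard \(k\)-simplex. This model is equivariant with respect to the face inclusions \(\Delta^{j}\hookrightarrow\Delta^k\): for each face inclusion \(\iota\) there is a cubical embedding \(T(\iota)\colon T_j\to T_k\) with \(t_k\circ T(\iota)=\iota\circ t_j\). For an arbitrary simplicial complex \(X\), \(T_X\) is then the colimit of the \(T_\sigma\cong T_{\dim\sigma}\) over the face poset of \(X\), glued along these face embeddings, and \(t_X\) is the map induced on the colimit by the \(t_\sigma\). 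I would begin by recalling this description explicitly. If Leary orders vertices, I would replace the ordering-dependent step by the symmetric-group-equivariant version, which is what his functoriality for maps injective on simplices implicitly uses. In that version each \(T_k\) carries an action of \(\mathrm{Sym}(k+1)\) by cubical automorphisms covering the permutation action on \(\Delta^k\), compatibly with the face embeddings.

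Given \(f\colon X\to Y\) injective on each simplex, every simplex \(\sigma\) of \(X\) maps isomorphically onto a simplex \(f(\sigma)\) of \(Y\). The isomorphism \(f|_\sigma\colon\sigma\to f(\sigma)\) is a permutation-type identification of standard simplices. It therefore induces a cubical isomorphism \(T_\sigma\to T_{f(\sigma)}\) covering \(f|_\sigma\), by the equivariance of \(t_k\). These maps are compatible with face embeddings, because restricting \(f|_\sigma\) to a face \(\tau\) gives \(f|_\tau\), and the symmetric-group actions commute with face embeddings. Hence they assemble on the colimit to \(T(f)\colon T_X\to T_Y\). This is, as I expect, exactly Leary's \(T(f)\); I would verify that against his definition. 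The square \(t_Y\circ T(f)=f\circ t_X\) can then be checked on each piece \(T_\sigma\). There it reads \(t_{f(\sigma)}\circ T(f|_\sigma)=f|_\sigma\circ t_\sigma\), which is the equivariance of the model map. Since \(T_X\) is the union of the \(T_\sigma\) (the last clause of Theorem~\ref{thm:leary}), commutativity on each piece gives commutativity globally.

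Functoriality follows the same way. For \(g\circ f\) we have \((g\circ f)|_\sigma=g|_{f(\sigma)}\circ f|_\sigma\), and the symmetric-group action is an honest action, i.e.\ a homomorphism. So on each piece \(T(g\circ f)|_{T_\sigma}=T(g)|_{T_{f(\sigma)}}\circ T(f)|_{T_\sigma}\), and also \(T(\mathrm{id})=\mathrm{id}\); these again glue to the global identities.

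The main obstacle is the first step. One has to confirm that Leary's model pieces really carry a strict (not merely up-to-homotopy) action of the symmetric group covering the permutation action, with \(t_k\) equivariant and compatible with faces. If his construction is stated via an ordering, I would check that the ingredients used to build \(T_k\) are choice-free, or can be made so by symmetrization; those ingredients are the cone/cubulation steps and the use of a fixed acyclic cubical complex attached along each simplex. Everything after that is bookkeeping on the colimit.
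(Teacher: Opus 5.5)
Your reduction is sound and has the same skeleton as the paper's argument. Because \(T_X\) is the union of the pieces \(T_\sigma\), and \(T(f)\) carries the piece for \(\sigma\) to the piece for \(f(\sigma)\), it suffices to check \(t_{f(\sigma)}\circ T(f|_\sigma)=f|_\sigma\circ t_\sigma\) for each simplex \(\sigma\).

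The gap is that this local identity is never proved. After identifying simplices with \(\Delta^k\), it says \(t_k\circ T(\pi)=\pi\circ t_k\) for \(\pi\in\mathrm{Sym}(k+1)\). That is exactly the lemma for an automorphism of a single simplex, and it is where the content lies. You build it into your ``recalled'' description of the construction and then list it as the unresolved main obstacle, so the argument does not yet establish the statement.

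The existence of a strict \(\mathrm{Sym}(k+1)\)-action on \(T_k\) is not the issue. It comes for free from functoriality, via \(\pi\mapsto T(\pi)\). What is in question is only whether \(t_k\) intertwines this action with the permutation action on \(\Delta^k\). Your fallback of symmetrizing the construction would not help. It would produce a map different from Leary's \(T(f)\), which is the map the lemma is about, and the properties in Theorem~\ref{thm:leary} would have to be re-proved for the modified complex.

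The missing idea, which the paper's proof uses, is the explicit form of \(t\) on each attached piece. When \(\sigma\) is attached to a subcomplex \(W\), the new part of \(T\) is a mapping-cylinder piece over \(T_{\partial\sigma}\). On it, \(t\) agrees with the previously defined map on \(T_{\partial\sigma}\), sends the terminal end to the barycentre of \(\sigma\), and is linear interpolation in between.

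Now induct on \(\dim\sigma\):
\begin{enumerate}
\item \(T(f)\) sends the piece for \(\sigma\) to the piece for \(f(\sigma)\) compatibly with this cylinder structure.
\item On the boundary end, \(t_Y\circ T(f)\) and \(f\circ t_X\) agree by the inductive hypothesis.
\item Since \(f|_\sigma\) is an affine isomorphism, it sends the barycentre of \(\sigma\) to that of \(f(\sigma)\) and commutes with linear interpolation.
\end{enumerate}
Hence the two composites agree on the whole piece. In particular, permutation-equivariance of \(t_k\) is immediate: beyond the boundary, \(t_k\) is defined using only the barycentre and affine interpolation, both of which are permutation-invariant. With this step supplied, your colimit bookkeeping finishes the proof.
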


\begin{proof}
The functoriality is part of Leary's construction in
\cite[Theorem~5.1]{Leary}. We recall why the compatibility with the maps
\(t_X\) also follows from the construction.

Leary constructs \(T_X\) functorially by assembling it over the simplices of
\(X\). If \(X\) is obtained from a subcomplex \(W\) by attaching a simplex
\(\sigma\), then \(T_X\) is obtained from \(T_W\) by attaching the corresponding
mapping-cylinder piece. The map \(t_X:T_X\to X\) agrees with \(t_W\) on
\(T_W\), sends the terminal part of the new piece to the barycentre of
\(\sigma\), and is defined on the remaining part by linear interpolation.

Now let \(f:X\to Y\) be injective on each simplex. Then \(f\) sends each
simplex \(\sigma\subset X\) isomorphically onto the simplex \(f(\sigma)\subset
Y\). The induced map
\(
T(f):T_X\to T_Y
\)
sends the piece of \(T_X\) associated to \(\sigma\) to the piece of \(T_Y\)
associated to \(f(\sigma)\). On this piece, both maps
\(
t_Y\circ T(f)\), and \(f\circ t_X\)
are given by the same linear interpolation from the image of the boundary to
the barycentre of \(f(\sigma)\). Hence they agree on every piece of \(T_X\),
and therefore agree globally. Thus
\(
t_Y\circ T(f)=f\circ t_X
\).
\end{proof}

We can now prove the required existence theorem.

\begin{theorem}\label{thm:existence}
For every integer \(n\geq 3\), there exists a finite \(n\)-dimensional
simplicial complex \(\mathcal K\) equipped with a free simplicial
\(\Ztwo\)-action such that \(\mathcal K\) is aspherical and has the integral
homology of \(\Sphere^n\).
\end{theorem}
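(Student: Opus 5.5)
We take \(X\) to be the boundary complex of the \((n+1)\)-dimensional crosspolytope, with \(a\colon X\to X\) the antipodal simplicial involution, and we apply Theorem~\ref{thm:leary}. This produces a finite locally CAT(0) cubical complex \(T_X\) together with a map \(t_X\colon T_X\to X\). Since \(n\geq 3\), we have \(\dim X=n\neq 2\), so \(\dim T_X=n\). The map \(t_X\) induces an isomorphism on integral homology (constant coefficients are a special case of local coefficients), so \(\widetilde H_*(T_X;\Z)\cong \widetilde H_*(\Sphere^n;\Z)\). Asphericity comes from the locally CAT(0) structure. Each component of \(T_X\) is a complete locally CAT(0) space, because a finite cube complex with the piecewise Euclidean metric is complete. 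By the Cartan--Hadamard theorem its universal cover is CAT(0), hence contractible, so \(\pi_i(T_X)=0\) for \(i\geq 2\). Connectedness follows from \(\widetilde H_0=0\).

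Next comes the involution. Since \(a\) is a simplicial automorphism, it is injective, so it is in particular injective on each simplex. Hence \(T(a)\colon T_X\to T_X\) is defined, and by the functoriality in Lemma~\ref{lem:leary-naturality} we get \(T(a)\circ T(a)=T(a\circ a)=T(\mathrm{id})=\mathrm{id}\). Thus \(T(a)\) is a cubical involution. For freeness we use the naturality square: if \(T(a)(p)=p\), then
\[
t_X(p)=t_X(T(a)(p))=a(t_X(p)),
\]
so \(t_X(p)\) would be a fixed point of \(a\). But \(a\) has no fixed points on \(|X|\), so \(T(a)\) is fixed-point-free.

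Finally, we need a \emph{simplicial} complex with a free simplicial action, whereas \(T_X\) is cubical. We take \(\mathcal K\) to be a suitable subdivision, for instance the barycentric subdivision of the cube complex, possibly repeated. The cubical automorphism \(T(a)\) permutes cells, so it induces a simplicial automorphism of the subdivision with the same underlying homeomorphism. Hence the action remains free, and the dimension, homology and homotopy groups are unchanged.

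The step that needs care is the regularity of the action. We want the quotient to behave well, and a free simplicial action should ideally move every simplex off itself, which is the standard requirement for \(\mathcal K\) to be a free simplicial \(\Ztwo\)-complex in the sense of~\cite{matousek2008using}. A single barycentric subdivision of the cubical complex already has this property. Its simplices are chains of cubes, so a simplex fixed setwise is fixed pointwise, because the involution preserves dimension and therefore fixes each cube in the chain and hence the barycentre. Freeness of the topological action then rules out invariant simplices. Apart from this, the main point to verify is that each cube of \(T_X\) is genuinely embedded, so that barycentric subdivision yields a simplicial complex. If it is not, I will use a second barycentric subdivision, which turns any regular CW complex into a simplicial one. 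With these checks, \(\mathcal K\) has all the properties claimed in Theorem~\ref{thm:existence}.
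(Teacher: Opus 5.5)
Your proposal is correct and follows the same route as the paper: Leary's \(T_X\) for the crosspolytope boundary, \(T(a)\) an involution by functoriality, freeness from \(t_X\circ T(a)=a\circ t_X\), and barycentric subdivision to get a simplicial complex. Your extra checks are refinements the paper leaves implicit, not a different argument: completeness for Cartan--Hadamard, the possible second subdivision if cubes of \(T_X\) are not embedded (justified by the fact that the second barycentric subdivision of a \(\Delta\)-complex is simplicial, not by the regular-CW statement you quote), and the no-invariant-simplex condition (automatic, since a simplicial involution preserving a simplex fixes its barycentre).
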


\begin{proof}
Let \(X=\Sphere^n\) be triangulated as the boundary complex of the \((n+1)\)-dimensional
crosspolytope, and let \(a:X\rightarrow X\)
be the antipodal involution. Applying Theorem~\ref{thm:leary}, we obtain a
finite locally CAT(0) cubical complex \(T_X\) and a map
\(
t_X:T_X\rightarrow X
\).

Since \(n\geq 3\), Theorem~\ref{thm:leary} gives
\(
\dim T_X=n.
\)
Since \(t_X\) is a homology isomorphism,
\[
H_*(T_X;\Z)\cong H_*(\Sphere^n;\Z).
\]
Thus \(T_X\) has the integral homology of \(\Sphere^n\). Since \(T_X\) is
locally CAT(0), its universal cover is CAT(0), hence contractible. Therefore
\(T_X\) is aspherical.

It remains to equip \(T_X\) with a fixed-point-free involution. By the
functoriality of Leary's construction, Lemma~\ref{lem:leary-naturality}, the antipodal involution \(a:X\to X\)
induces a cubical automorphism
\(
T(a):T_X\rightarrow T_X.
\)
Since \(a^2=\mathrm{id}_X\) and \(T\) is functorial, we have
\[
T(a)^2=T(a^2)=T(\mathrm{id}_X)=\mathrm{id}_{T_X}.
\]
Thus \(T(a)\) is an involution.

We claim that \(T(a)\) is fixed-point-free. Suppose, for contradiction, that
there exists \(p\in T_X\) such that
\[
T(a)(p)=p.
\]
By the naturality of \(t_X\), we have
\[
t_X\circ T(a)=a\circ t_X.
\]
Therefore
\[
a(t_X(p))
=
t_X(T(a)(p))
=
t_X(p).
\]
Thus \(t_X(p)\) would be a fixed point of the antipodal map \(a\) on
\(\Sphere^n\), which is impossible. Hence \(T(a)\) is fixed-point-free.

Finally, take the barycentric subdivision
\(
\mathcal K:=\operatorname{sd}(T_X)
\). The cubical involution \(T(a)\) induces a simplicial involution on
\(\mathcal K\). Since \(T(a)\) is fixed-point-free on the underlying polyhedron
of \(T_X\), the induced simplicial involution on \(\mathcal K\) is also
fixed-point-free. Barycentric subdivision preserves the underlying homotopy
type and the dimension. Therefore \(\mathcal K\) is a finite
\(n\)-dimensional simplicial complex with a free simplicial \(\Ztwo\)-action,
is aspherical, and has the integral homology of \(\Sphere^n\).
\end{proof}

We now combine the previous two results.

\begin{proof}[Proof of Theorem~\ref{thm:main}]
For \(n=2\), the Simonyi, Tardos, and Vre\'cica~\cite[Section 5]{simonyi2009local} established the fact. So, let \(n\ge 3\). By Theorem~\ref{thm:existence}, there exists a finite \(n\)-dimensional free simplicial \(\Z_2\)-complex \(\mathcal{K}\) which is aspherical and satisfies
\[
\widetilde H_i(\mathcal{K};\Z)=0
\qquad\text{for all } i<n.
\]
Therefore Lemma~\ref{lem:criterion} applies and yields
\(
\coind(\mathcal{K})=1\), and \(\coind(\susp \mathcal{K})=n+1\).
This proves the theorem.
\end{proof}

\section*{Acknowledgments}
The author would like to express his sincere gratitude to Professor Ian Leary
for his generous and insightful feedback, and for helpful correspondence
concerning the functoriality and naturality properties of the metric
Kan--Thurston construction. His comments greatly improved the presentation of
the argument used in this paper.

\bibliographystyle{amsplain}
\bibliography{biblio}
\end{document}